\documentclass[12pt]{article}

\usepackage{color}
\usepackage{bm}
\textwidth=210truemm\relax
\advance \textwidth -60truemm\relax

\oddsidemargin 30truemm\relax
\advance\oddsidemargin -1truein\relax

\evensidemargin=\oddsidemargin

\textheight=297truemm\relax
\advance\textheight -60truemm\relax
\topmargin 30truemm\relax
\advance\topmargin -1truein\relax
\headheight 0pt
\headsep 0pt

\unitlength\textwidth
\divide\unitlength by 150\relax

\usepackage{amsmath,amssymb}
\usepackage{bm}

\bmdefine{\aaa}{a}
\bmdefine{\bbb}{b}
\bmdefine{\ccc}{c}
\bmdefine{\eee}{e}
\bmdefine{\xxx}{x}
\bmdefine{\yyy}{y}
\bmdefine{\zzz}{z}
\bmdefine{\zerovec}{0}

\newcommand{\CCC}{\mathbb{C}}
\newcommand{\RRR}{\mathbb{R}}
\newcommand{\FFF}{\mathbb{F}}

\newcommand{\rank}{\mathrm{rank}}

\makeatletter
\def\proof{\@ifnextchar[{\@proof}{\@proof[]}}
\def\@proof[#1]{\ifx#1\relax\relax{\noindent\bf Proof}\else{\noindent\bf Proof #1}\fi\quad}

\makeatother

\numberwithin{equation}{section}
\newtheorem{thm}[equation]{Theorem}
\newtheorem{lemma}[equation]{Lemma}
\newtheorem{prop}[equation]{Proposition}

\begin{document}
\title{Perfect type of $n$-tensors}
\author{Toshio Sumi\footnote{Kyushu University, Faculty of Design, 
4-9-1 Shiobaru, Minami-ku, Fukuoka, 815-8540, JAPAN,
e-mail: sumi@design.kyushu-u.ac.jp},
Toshio Sakata\footnote{Kyushu University, Faculty of Design, 
4-9-1 Shiobaru, Minami-ku, Fukuoka, 815-8540, JAPAN,
e-mail: sakata@design.kyushu-u.ac.jp},
and Mitsuhiro Miyazaki\footnote{Kyoto University of Education, 
Department of Mathematics,
1 Fujinomoricho, Fukakusa, Fushimi-ku, Kyoto, 612-8522, JAPAN,
e-mail: g53448@kyokyo-u.ac.jp}}
\maketitle
\begin{abstract}
In various application fields, tensor type data are used 
recently and then a typical rank is important.  
Although there may be more than one typical ranks over the real number field,
a generic rank over the complex number field is the minimum
number of them.
The set of $n$-tensors of type
$p_1\times p_2\times\cdots\times p_n$ is called perfect, if
it has a typical rank $\max(p_1,\ldots,p_n)$.
In this paper, we determine perfect types of $n$-tensor.


\end{abstract}
\section{Introduction}

An $p_1\times p_2\times\cdots\times p_n$ tensor over a field $\FFF$ is an element of 
the tensor product of $n$ vector spaces 
$\FFF^{p_1}, \FFF^{p_2},\ldots, \FFF^{p_n}$.
Thus every tensor can be expressed as a sum of tensors of the form
$\aaa_1\otimes \aaa_2\otimes\cdots\otimes \aaa_n$
for $\aaa_i \in \FFF^{p_i}$, $i=1,2,\ldots,n$. 
The rank $\rank_{\FFF}\, T$ of a tensor $T$ means that the minimum number $r$ 
of rank one tensors which express $T$ as a sum.
The rank depends on the field. 

The set $T(p_1,\ldots,p_n\/;\FFF)$ of all $p_1\times\cdots\times p_n$ tensors
is $\FFF^{p_1}\times\cdots\times \FFF^{p_n}$ as a set.
We consider the Euclidean topology on 
$\FFF^{p_1}\times\cdots\times \FFF^{p_n}=\FFF^{p_1\cdots p_n}$ 
as a topology on the set $T(p_1,\ldots,p_n\/;\FFF)$.

Now let $\FFF$ be the real number field $\RRR$ or the complex number 
field $\CCC$.
A typical rank, denoted by ${\rm typical\_rank}_{\FFF}(p_1,\ldots,p_n)$, 
of $T(p_1,\ldots,p_n\/;\FFF)$ is defined as the set of integers $r$ such that
the set of rank $r$ tensors 
has a positive Lebesgue measure in $T(p_1,\ldots,p_n\/;\FFF)$.
A typical rank of tensors is one of important tools for experimental
simulation. 
We know a typical rank of $3$-tensors of special types.
ten Berge obtained that
the typical rank of $m\times n\times 2$ tensors is
$\min(n,2m)$ if $2\leq m< n$ and
$\{\min(n,2m),\min(n+1,2m)\}$ if $2\leq m=n$ \cite{tenBerge-etal:1999}, and
the minimum number of the typical rank of $m\times n\times p$ tensors 
with $3\leq m\leq n$ is just $\min(p,mn)$ if $p\geq (m-1)n$ \cite{tenBerge:2000}
over the real number field.
In \cite{Miyazaki-etal:2009} we considered a generic form of 
$m\times n\times 3$ tensors.
Recently, Comon et al. \cite{Comon-etal:2009} studied the minimum number of 
the typical rank of $3$-tensors by using the 
Jacobian of the map
$$\{\aaa(r),\bbb(r),\ccc(r)\} \to T=\sum_{r=1}^R \aaa(r)\odot \bbb(r)\odot \ccc(r).$$
In contrast to 
that there may be more than one typical ranks over the real number field,
we remark that a typical rank of $n$-tensors 
over the complex number field consists
of just one number and thus it is called a generic rank.
In this paper,
we consider the smallest typical rank of $n$-tensors
over the real number field. 
It is equal to the unique typical rank of $n$-tensors
over the complex number field (cf. \cite{Northcott:1980}).

A format $(p_1,\ldots,p_n)$ is called \lq\lq{}perfect\rq\rq{} if
$\max(p_1,\ldots,p_n)$ is a typical rank of $T(p_1,\ldots,p_n;\RRR)$.
Suppose that $2\leq p_1\leq p_2\leq p_3$.
In \cite{tenBerge:2000}, $p_1\times p_2\times p_3$ tensor is called 
\lq\lq{}tall\rq\rq{} if $p_1p_2-p_2<p_3<p_1p_2$ 
and tall $p_1\times p_2\times p_3$ tensors have a unique typical rank $p_3$.
Thus $(p_1,p_2,p_3)$ is perfect if $p_1p_2-p_2<p_3\leq p_1p_2$.
More generally, 
if $p_1p_2-p_1-p_2+2\leq p_3\leq p_1p_2$ then $(p_1,p_2,p_3)$ is perfect 
(see \cite[exercise 20.6, page 535]{Buergisser-etal:1997}).
We extend this result for $n$-tensors.
Our main theorem is as follows.

\begin{thm} \label{thm:main}
Suppose that $n\geq 2$ and $2\leq p_1\leq \cdots\leq p_{n}$.
Let $q=p_1\cdots p_{n}-(p_1+\cdots +p_{n})+n$.
If $q\leq p_{n+1}\leq p_1\cdots p_{n}$
then $p_{n+1}$ is the smallest typical rank of 
$p_1\times \cdots\times p_{n+1}$ tensors and $(p_1,\ldots,p_{n+1})$ is perfect.
Conversely if $(p_1,\ldots,p_{n+1})$ is perfect
then $q\leq p_{n+1}\leq p_1\cdots p_{n}$.
\end{thm}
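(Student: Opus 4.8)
The plan is to reduce the statement to a question about the Segre variety of rank-one $n$-tensors by flattening the last mode. Write $N=p_1\cdots p_n$ and regard a $p_1\times\cdots\times p_{n+1}$ tensor $T$ as an $N\times p_{n+1}$ matrix $M$ obtained by grouping the first $n$ indices. A decomposition $T=\sum_{i=1}^r \aaa_1^{(i)}\otimes\cdots\otimes\aaa_{n+1}^{(i)}$ corresponds to $M=\sum_{i=1}^r v_i\,w_i^{\top}$ with $v_i=\aaa_1^{(i)}\otimes\cdots\otimes\aaa_n^{(i)}$ a point of the affine Segre cone $X\subseteq\FFF^N$ of rank-one $n$-tensors and $w_i\in\FFF^{p_{n+1}}$. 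Hence $\rank_\FFF T$ equals the least $r$ for which the column space of $M$ lies in the span of $r$ points of $X$. Since flattening is a linear isomorphism onto $\FFF^{N\times p_{n+1}}$, for generic $T$ the matrix $M$ is generic, so when $p_{n+1}\le N$ its column space $V$ is a generic $p_{n+1}$-dimensional subspace of $\FFF^N$. I would run all dimension counts over $\CCC$ and use the fact recorded in the introduction that the smallest real typical rank equals the complex generic rank; note that $\dim X=\sum_{k=1}^n(p_k-1)$, so $X$ has codimension $q-1$ in $\mathbb{P}^{N-1}$ and the affine cone has dimension $N-q+1$.

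First I would dispose of the lower bound and the two converse inequalities. For generic $T$ the matrix $M$ has full column rank $p_{n+1}$, and since tensor rank dominates the rank of any flattening, the generic rank is at least $p_{n+1}$ whenever $p_{n+1}\le N$. For the bound $p_{n+1}\le N$ in the converse, note that the standard basis rank-one tensors already span $\FFF^N$, so every $T$ satisfies $\rank_\FFF T\le N$; thus if $p_{n+1}>N$ no tensor has rank $p_{n+1}$ and the format cannot be perfect. For the bound $p_{n+1}\ge q$, observe that a generic projective subspace of dimension $p_{n+1}-1<q-1=\operatorname{codim}X$ misses $X$, so a generic $V$ with $\dim V=p_{n+1}<q$ contains no rank-one tensor and cannot be spanned by $p_{n+1}$ of them; the generic rank then exceeds $p_{n+1}$, whence the smallest typical rank exceeds $p_{n+1}$ and the format is not perfect.

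The heart of the matter is the upper bound: when $q\le p_{n+1}\le N$, a generic $p_{n+1}$-dimensional subspace $V$ is spanned by $p_{n+1}$ points of $X$, giving $\rank_\FFF T\le p_{n+1}$ and, with the lower bound, generic rank exactly $p_{n+1}$. To prove this I would show that the span map $\Phi\colon X^{\,p_{n+1}}\to G(p_{n+1},N)$, $(v_1,\ldots,v_{p_{n+1}})\mapsto\langle v_1,\ldots,v_{p_{n+1}}\rangle$, is dominant. The expected fibre dimension is $p_{n+1}(p_{n+1}-q+1)$, matching the dimension of the cone over $\mathbb{P}(V)\cap X$, whose expected dimension $p_{n+1}-q$ is nonnegative exactly when $p_{n+1}\ge q$; so the numerology is consistent with dominance precisely in our range. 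Dominance itself I would establish by a Terracini-style computation of $d\Phi$ at a generic point: identifying $T_VG(p_{n+1},N)$ with $\operatorname{Hom}(V,\FFF^N/V)$, the image of $d\Phi$ is $\bigoplus_i (T_{v_i}X+V)/V$, so surjectivity amounts to $T_{v_i}X+V=\FFF^N$ for each $i$. Since $v_i\in T_{v_i}X$ and $\dim T_{v_i}X=N-q+1$, this is the condition that the remaining $p_{n+1}-1$ generic points of $X$ span $\FFF^N$ modulo $T_{v_i}X$, a quotient of dimension $q-1$, which is feasible since $p_{n+1}-1\ge q-1$.

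The step I expect to be the main obstacle is precisely this surjectivity claim underlying dominance: one must verify that, as the $v_j$ range over the irreducible nondegenerate cone $X$, their images in $\FFF^N/T_{v_i}X$ are in general position, so that $p_{n+1}-1\ge q-1$ of them already span the quotient. This is where nondegeneracy of the Segre variety and smoothness of $X$ (guaranteeing $\dim T_{v_i}X=N-q+1$ at every point) are essential, and where a genuine argument rather than a bare dimension count is needed. Granting dominance, a generic $V$ lies in the image of $\Phi$, so the complex generic rank equals $p_{n+1}$; hence $p_{n+1}$ is the smallest real typical rank, and being a typical rank equal to $\max(p_1,\ldots,p_{n+1})$ it certifies that $(p_1,\ldots,p_{n+1})$ is perfect, completing the equivalence.
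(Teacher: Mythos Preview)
Your approach is sound but genuinely different from the paper's. Both proofs hinge on showing that the parametrization by $p_{n+1}$ rank-one tensors is dominant, but the methods diverge sharply at the key step.

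The paper proceeds by \emph{explicit construction}: it writes down a specific point $\zzz=(\aaa^{(1)}_1,\ldots,\aaa^{(n+1)}_{p_{n+1}})$ indexed by a carefully chosen subset $S$ of the lattice $\{1,\ldots,p_1\}\times\cdots\times\{1,\ldots,p_n\}$ and shows by a detailed combinatorial induction (Lemmas~2.3--2.5 and Theorem~2.6) that the Jacobian $J(\varphi(\zzz))$ of the sum-of-rank-ones map into the full tensor space has trivial kernel. This is elementary linear algebra carried out directly over $\RRR$, with no appeal to algebraic-geometric generalities. By contrast, you factor through the Grassmannian $G(p_{n+1},N)$ and invoke a Terracini-type criterion, reducing to the claim that $T_{v_i}X+V=\FFF^N$ for generic $(v_1,\ldots,v_{p_{n+1}})\in X^{p_{n+1}}$.

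The step you flag as the main obstacle is genuinely the crux, but it is not hard to close once stated cleanly: since the affine Segre cone $X$ is irreducible and spans $\FFF^N$, its image under any linear projection $\FFF^N\to\FFF^N/T_{v_i}X$ is irreducible and spanning, and for any irreducible variety spanning a $d$-dimensional space, $d$ generic points are linearly independent (the locus of dependent $d$-tuples is a proper closed subvariety). Applying this with $d=q-1\le p_{n+1}-1$ for each $i$ and intersecting the resulting open dense sets finishes the argument. You should make this step explicit rather than leave it as ``where a genuine argument is needed''.

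Your converse via generic avoidance of $X$ and the paper's converse via Proposition~2.2 are essentially the same dimension count phrased differently. What each approach buys: the paper's explicit point gives a self-contained proof over $\RRR$ without secant-variety machinery; your Grassmannian/Terracini route is shorter and conceptually cleaner, but relies on standard facts about irreducible nondegenerate varieties and on the transfer from complex generic rank to smallest real typical rank.
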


We show the theorem in the next section.

\section{Proof of Theorem~\ref{thm:main}}

In this section we give a proof of Theorem~\ref{thm:main}.
First we give a range of typical ranks.

\begin{lemma} \label{lem:minimal typical rank}
Let $2\leq p_1\leq p_2\leq \cdots \leq p_{n+1}\leq p_1\cdots p_{n}$.
A typical rank of $p_1\times\cdots\times p_{n+1}$ tensors is
greater than or equal to $p_{n+1}$ and less than or equal to $p_1p_2\cdots p_{n}$.
\end{lemma}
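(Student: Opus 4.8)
The plan is to treat the two inequalities separately, and in both cases to exploit the matrix flattening that separates the last mode from the first $n$. Writing a tensor $T$ of format $p_1\times\cdots\times p_{n+1}$ as an element of $(\FFF^{p_1}\otimes\cdots\otimes\FFF^{p_n})\otimes\FFF^{p_{n+1}}$, I identify it with a matrix $M(T)$ of size $(p_1\cdots p_n)\times p_{n+1}$, whose columns are indexed by the last mode and whose rows are indexed by a fixed basis $\eee_{j_1}\otimes\cdots\otimes\eee_{j_n}$ of the first $n$ factors. The two standard facts I will use are that flattening is linear and carries a rank-one tensor to a rank-one matrix, and that $p_{n+1}\le p_1\cdots p_n$ by hypothesis.

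For the upper bound I would simply expand $T$ in this basis: $T=\sum(\eee_{j_1}\otimes\cdots\otimes\eee_{j_n})\otimes\ccc_{j_1\cdots j_n}$, where each coefficient vector $\ccc_{j_1\cdots j_n}\in\FFF^{p_{n+1}}$ is the corresponding column of $M(T)$. Each summand $\eee_{j_1}\otimes\cdots\otimes\eee_{j_n}\otimes\ccc_{j_1\cdots j_n}$ is a rank-one $(n+1)$-tensor, and there are exactly $p_1\cdots p_n$ of them, so $\rank_\FFF T\le p_1\cdots p_n$ for \emph{every} $T$. In particular every typical rank is at most $p_1\cdots p_n$. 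This direction is immediate and presents no real difficulty.

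For the lower bound I would argue that $\rank M(T)\le\rank_\FFF T$ for all $T$, since flattening a sum of $r$ rank-one tensors yields a sum of $r$ rank-one matrices. Because $p_{n+1}\le p_1\cdots p_n$, the matrix $M(T)$ has full column rank $p_{n+1}$ for generic $T$; to see this I exhibit one tensor whose flattening is genuinely full rank, namely placing the standard basis vectors of $\FFF^{p_{n+1}}$ against $p_{n+1}$ distinct multi-indices $(j_1,\ldots,j_n)$, which produces a $p_{n+1}\times p_{n+1}$ identity block inside $M(T)$. The locus where $\rank M(T)<p_{n+1}$ is then the common zero set of the $p_{n+1}\times p_{n+1}$ minors of $M(T)$, a proper algebraic subvariety of $T(p_1,\ldots,p_{n+1};\FFF)$ and hence of Lebesgue measure zero.

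Finally I would combine the two observations. If some $r<p_{n+1}$ were a typical rank, the set of rank-$r$ tensors would have positive measure; yet every such $T$ satisfies $\rank M(T)\le r<p_{n+1}$ and therefore lies in the measure-zero locus just described, a contradiction. Hence every typical rank $r$ satisfies $p_{n+1}\le r\le p_1\cdots p_n$, as claimed. The only point that requires any care is the measure-zero statement for the rank-deficient flattening, and this rests entirely on producing the explicit full-rank example so that the ideal of maximal minors is nontrivial; everything else is routine.
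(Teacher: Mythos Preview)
Your proof is correct and follows essentially the same approach as the paper's own proof: both use the flattening $M(T)$ of the last mode against the first $n$ to get the lower bound via $\rank M(T)\le\rank_\FFF T$ and the Zariski-open/measure-zero dichotomy, and both use the basis expansion $\sum\eee_{j_1}\otimes\cdots\otimes\eee_{j_n}\otimes\ccc_{j_1\cdots j_n}$ for the upper bound. Your argument is slightly more explicit in exhibiting a tensor whose flattening has full column rank (so that the rank-deficient locus is \emph{proper}), a point the paper leaves implicit.
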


\begin{proof}
Let $A=(A_1;\cdots;A_{p_{n+1}})$ be an $p_1\times\cdots\times p_{n+1}$ tensor,
where $A_j$ is a $p_1\times\cdots\times p_{n}$ tensor
for $j=1,\ldots,p_{n+1}$.
Let consider the vector space $V$ spanned by $A_1,\ldots,A_{p_{n+1}}$.
We denote by $f(A_j)$ a column vector given by flattening of $A_j$.
Note that 
$$\rank(A)\geq \rank(f(A_1),\ldots,f(A_{p_{n+1}}))=\dim V.$$
If $\dim V<p_{n+1}$ then all $p_{n+1}$-minors of the matrix 
$\begin{pmatrix} f(A_1)&\cdots,f(A_{p_{n+1}})\end{pmatrix}$ are zero.
Thus $\{(X_1;\cdots;X_{p_{n+1}}) \mid \dim \langle X_1,\ldots,X_{p_{n+1}}\rangle=p_{n+1}\}$ 
is a Zariski open set in $T(p_1,\ldots,p_{n+1}) \cong \FFF^{p_1\cdots p_{n+1}}$.
Thus a typical rank is greater than or equal to $p_{n+1}$.
\par
In general $A=(a_{i_1i_2\ldots i_ni_{n+1}})$ is described as a sum of $p_1\cdots p_n$ rank one tensors
$$\eee_{i_1}^{(1)}\odot\cdots\odot\eee_{i_{n}}^{(n)}\odot 
  (a_{i_1\ldots i_{n}1},
\ldots, a_{i_1\ldots i_{n}p_{n+1}}),$$
where $e_{i}^{(j)}$ is the $i$-th row vector of the $p_j\times p_j$
identity matrix.
Thus $\rank(A)\leq p_1\cdots p_n$.
\end{proof}

Let $\varphi_1\colon \RRR^{p_1+\cdots+p_n}\to T(p_1,\ldots,p_n)$ be a map
defined by 
$$\varphi_1(\aaa_{1},\ldots,\aaa_{n})
=\aaa_{1}\odot \cdots\odot \aaa_{n}$$
and
$\varphi\colon \RRR^{(p_1+\cdots+p_n)r}\to T(p_1,\ldots,p_n)$ be a map
defined by 
$$\varphi(\aaa^{(1)}_{1},\ldots,\aaa^{(1)}_{n},\ldots,
  \aaa^{(r)}_{1},\ldots,\aaa^{(r)}_{n})
=\sum_{h=1}^r \varphi_1(\aaa^{(h)}_{1},\ldots,\aaa^{(h)}_{n}).$$
Put 
\begin{equation}\label{eq:phi1}
\phi_1(\aaa_1,\ldots,\aaa_n):=
\begin{pmatrix} E_{p_1}\otimes \aaa_2\otimes\cdots\otimes \aaa_n \\
  \aaa_1 \otimes E_{p_2} \otimes\cdots\otimes \aaa_n \\
  \vdots \\
  \aaa_1 \otimes\cdots\otimes \aaa_{p_{n-1}} \otimes E_{p_n} \end{pmatrix}
\end{equation}
for $\aaa_1\in\RRR^{p_1}$, $\ldots$, $\aaa_n\in\RRR^{p_n}$.
Then the Jacobian $J(\varphi)$ of $\varphi$ at 
$$(\aaa^{(1)}_{1},\ldots,\aaa^{(1)}_{n},\ldots,
  \aaa^{(r)}_{1},\ldots,\aaa^{(r)}_{n})$$
is given by 
$$\begin{pmatrix} \phi_1(\aaa^{(1)}_{1},\ldots,\aaa^{(1)}_{n}) \\ \vdots \\ 
  \phi_1(\aaa^{(r)}_{1},\ldots,\aaa^{(r)}_{n}) \end{pmatrix}.
$$
\par
If $r$ is a typical rank of $T(p_1,p_2,p_3)$ then
$$\frac{p_1p_2p_3}{p_1+p_2+p_3-2}\leq r\leq \min(p_1p_2,p_1p_3,p_2p_3)$$
\cite{Howell:1978,Buergisser-etal:1997}.
This result also holds for $n$-tensors.

\begin{prop}
\label{prop:typical_rank_range}
A typical rank of $p_1\times\cdots\times p_n$ tensors
is greater than or equal to
$$\frac{p_1p_2\cdots p_n}{p_1+p_2+\cdots+p_n-n+1}$$
and less than or equal to 
$$\min(p_2p_3\cdots p_n,p_1p_3\cdots p_n,
\ldots, p_1p_2\cdots p_{n-1}).$$
\end{prop}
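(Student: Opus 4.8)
The plan is to establish the two bounds of Proposition~\ref{prop:typical_rank_range} separately, both as generalizations of the $n=3$ case already cited.

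The upper bound is immediate from Lemma~\ref{lem:minimal typical rank}. Indeed, that lemma (applied with the roles of the indices permuted) shows that a typical rank is at most $p_1\cdots\widehat{p_k}\cdots p_n$ for each choice of index $k$ that we \emph{remove}, since a typical rank is unchanged under permuting the factors of the tensor product. Concretely, for any $k$ I would flatten the $n$-tensor into a $p_k\times(p_1\cdots\widehat{p_k}\cdots p_n)$ matrix and apply the second half of the proof of Lemma~\ref{lem:minimal typical rank}, writing the tensor as a sum of $p_1\cdots\widehat{p_k}\cdots p_n$ rank-one tensors using the standard basis on all factors except the $k$-th. Taking the minimum over all $k$ yields the claimed upper bound $\min(p_2\cdots p_n,\ p_1p_3\cdots p_n,\ \ldots,\ p_1\cdots p_{n-1})$.

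The lower bound is the genuine content, and here the Jacobian setup introduced just before the proposition is the natural tool. The map $\varphi$ parametrizes rank-at-most-$r$ tensors by $r(p_1+\cdots+p_n)$ real parameters, and its image fills a set of positive measure in $T(p_1,\ldots,p_n)\cong\RRR^{p_1\cdots p_n}$ only if the Jacobian $J(\varphi)$ attains full row rank $p_1\cdots p_n$ at some point. The idea is that if $r$ is a typical rank, then generic rank-$r$ tensors form a set of positive measure, so $\varphi$ must be dominant; by Sard's theorem the image of $\varphi$ has measure zero whenever its Jacobian has rank strictly less than $p_1\cdots p_n$ everywhere. Hence a necessary condition for $r$ to be typical is that the number of parameters is at least the dimension of the ambient space, i.e. $r(p_1+\cdots+p_n)\geq p_1\cdots p_n$.

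The main obstacle, and the point requiring care, is that this naive parameter count gives only $r\geq p_1\cdots p_n/(p_1+\cdots+p_n)$, which is weaker than the asserted bound with denominator $p_1+\cdots+p_n-n+1$. The improvement comes from the scaling redundancy intrinsic to rank-one tensors: each summand $\aaa^{(h)}_1\odot\cdots\odot\aaa^{(h)}_n$ is invariant under rescaling the vectors $\aaa^{(h)}_i\mapsto\lambda_i\aaa^{(h)}_i$ subject to $\prod_i\lambda_i=1$, an $(n-1)$-dimensional group acting on each of the $r$ blocks. Consequently the columns of each block $\phi_1(\aaa^{(h)}_1,\ldots,\aaa^{(h)}_n)$ satisfy $n-1$ linear relations, so the rank of $J(\varphi)$ can be at most $r(p_1+\cdots+p_n)-r(n-1)=r(p_1+\cdots+p_n-n+1)$. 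I would verify explicitly from the block form \eqref{eq:phi1} that the vectors $(\aaa_1,\zerovec,\ldots,\zerovec,-\aaa_i,\zerovec,\ldots)$ lie in the kernel of $\phi_1$ for $i=2,\ldots,n$, exhibiting these $n-1$ dependencies. Demanding $r(p_1+\cdots+p_n-n+1)\geq p_1\cdots p_n$ then yields the stated lower bound, completing the proof.
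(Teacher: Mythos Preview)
Your argument is correct. For the lower bound, both you and the paper arrive at the same dimension count---that the cone of rank-one tensors has dimension $p_1+\cdots+p_n-n+1$---but via different bookkeeping. The paper invokes the Segre embedding $\mathbb{RP}^{p_1-1}\times\cdots\times\mathbb{RP}^{p_n-1}\to\mathbb{RP}^{p_1\cdots p_n-1}$, reads off the dimension $\sum_i(p_i-1)$ of its image, and adds one for the affine cone. You instead remain in the Jacobian framework already set up just before the proposition, exhibiting the $n-1$ linear relations in each block $\phi_1$ coming from the torus action and then appealing to Sard. One terminological slip: with the paper's convention $\phi_1$ has $p_1+\cdots+p_n$ rows and $p_1\cdots p_n$ columns, so your scaling vectors $(\aaa_1,\zerovec,\ldots,-\aaa_i,\ldots,\zerovec)$ are \emph{row} relations (left kernel), not column relations; the conclusion $\rank\phi_1\le p_1+\cdots+p_n-(n-1)$ is unaffected. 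Your route has the mild advantage of not importing projective language and of tying in directly with the Jacobian computations that follow. The paper's proof is silent on the upper bound; your reduction to Lemma~\ref{lem:minimal typical rank} by permuting the factors is exactly the intended argument.
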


\begin{proof}
Let consider the Segre embedding which is a map of projective spaces
$$RP^{p_1-1}\times\cdots\times RP^{p_n-1} \to RP^{p_1\cdots p_n-1}$$
induced by the tensor product map $\varphi_1$.
The image $\textrm{im}(\varphi_1)$ has dimension $p_1+p_2+\cdots+p_n-n$.
Since $\{\aaa_1\odot\ldots\odot \aaa_n \mid \aaa_j\in\RRR^{p_j}\}$ is the affine cone of $\textrm{im}(\varphi_1)$,
it's dimension is $p_1+p_2+\cdots+p_n-n+1$.
If $r$ is a typical rank of $T(p_1,\ldots,p_n)$,
then $\dim T(p_1,\ldots,p_n) \leq r\dim(\textrm{im}(\varphi_1))$
and thus 
$$r\geq \frac{p_1\cdots p_n}{p_1+p_2+\cdots+p_n-n+1}.$$
\end{proof}

From now on, 
let $2\leq p_1\leq p_2\leq \cdots \leq p_n$
and put $q=p_1p_2\cdots p_n-(p_1+p_2+\cdots +p_n) +n$.
Suppose that $q\leq p_{n+1}\leq p_1p_2\cdots p_n$.
By Lemma~\ref{lem:minimal typical rank} it suffices to show
that the Jacobian $J(\varphi)$ has full rank at some point.
\par
Let $S$ be a subset of 
$$\{(k_1,\ldots,k_n) \mid 1\leq k_j\leq p_j,\ j=1,\ldots n\}$$
with cardinality $p_{n+1}$ which contains 
$$S_0=\{(k_1,\ldots,k_n) \mid 
  1\leq k_j\leq p_j,\ \#\{j\mid k_j=p_j\}\ne n-1\}$$
and let $f\colon S \to \{1,2,\ldots, p_{n+1}\}$ be a bijection.

We define maps $u_1,u_2,\ldots,u_n$ by
$u_j(x_1,\ldots,x_n)=0$ if $x_j=p_j$, 
$u_j(x_1,\ldots,x_n)=1$ if $x_s=p_s$ for some $s\ne j$ and
otherwise
$u_j(x_1,\ldots,x_n)={x_j}+1$,
for $j=1,\ldots,n$.

We denote by $\eee_j$ the $j$th row vector of the identity matrix.
We put $\aaa^{(h)}_{k} \in \RRR^{p_h}$, $h=1,\ldots, n+1$,
as
\begin{eqnarray*}
\aaa^{(h)}_{f(k_1,\ldots,k_n)} &=&\eee_{k_h}+u_h(k_1,\ldots,k_n)\eee_{p_h}, 
  \quad 1\leq h \leq n\\
\aaa^{(n+1)}_{f(k_1,\ldots,k_n)} &=& \eee_{f(k_1,\ldots,k_n)} 
\end{eqnarray*}
for all $(k_1,\ldots,k_n)\in S$.

We denote the row vector $\xxx$ as $(x(k_1,\ldots,k_{n+1}))$ if
$$\xxx=\sum_{k_1,\ldots,k_{n+1}} x(k_1,\ldots,k_{n+1})
  \eee_{k_1}\otimes \cdots \otimes \eee_{k_{n+1}}.$$ 
Let $g\colon \RRR^{p_1\cdots p_{n+1}} \to 
  \RRR[x(1,\ldots,1),\ldots, x(p_1,\ldots,p_{n+1})]$
be a map defined by
$$g(\sum_{k_1,\ldots,k_{n+1}} h_{k_1,\ldots,k_{n+1}}
  \eee_{k_1}\otimes \cdots \otimes \eee_{k_{n+1}})
  =\sum_{k_1,\ldots,k_{n+1}} h_{k_1,\ldots,k_{n+1}} 
     x(k_1,\ldots,k_{n+1}).$$
Note that $g$ is linear, that is, it holds that
$$g(s_1\yyy_1+s_2\yyy_2)=s_1g(\yyy_1)+s_2g(\yyy_2)$$
for $s_1,s_2\in \RRR$ and $\yyy_1,\yyy_2 \in \RRR^{p_1\cdots p_{n+1}}$.
We abbreviate $\eee_{i_1}\otimes\cdots\otimes \eee_{i_n}$ to 
$\eee(i_1,\ldots,i_n)$,
$u_j(k_1,\ldots,k_n)$ to $u_j$, and 
$u_j(i'_1,\ldots,i'_n)$ to $v_j$.
Then $x(i_1,\ldots,i_n)=g(\eee(i_1,\ldots,i_n))$.

Put
$$\zzz=(\aaa^{(1)}_1,\ldots,\aaa^{(n+1)}_1,\ldots,
\aaa^{(1)}_{p_{n+1}},\ldots,\aaa^{(n+1)}_{p_{n+1}}).
$$
We prepare three lemmas to show that
the equation $J(\varphi(\zzz))\xxx^T=\zerovec$ 
has no nonzero solution.

\begin{lemma} \label{lem:codim1}
Let $n\geq 2$.
Suppose that
$$g((\eee_{k_1}+\eee_{p_1}) \otimes \cdots \otimes (\eee_{k_{n}}+\eee_{p_{n}})) = 0$$
for any $(k_1,\ldots, k_n)\in S_0\smallsetminus\{(p_1,\ldots,p_n)\}$.
Then it holds that
\begin{equation*}
\begin{split}
x(k_1,k_2,&\ldots, k_n)=(-1)^{n-1}(x(k_1,p_2,p_3,\ldots, p_n)
  +x(p_1,k_2,p_3,\ldots, p_n)\\
& +\cdots+x(p_1,p_2,\ldots, p_{n-1},k_n) +(n-1) x(p_1,p_2,\ldots, p_n)).
\end{split}
\end{equation*}
\end{lemma}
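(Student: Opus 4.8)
The plan is to use the linearity of $g$ to turn the single scalar hypothesis attached to each index tuple into an entire family of linear relations among the coordinates $x(\cdot)$, and then to solve that family by M\"obius inversion (inclusion--exclusion) over the Boolean lattice of subsets of $[n]:=\{1,\dots,n\}$.

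For $U\subseteq[n]$ let $m^U$ denote the index tuple whose $j$th entry is $k_j$ if $j\in U$ and $p_j$ if $j\notin U$, and abbreviate $z_U:=x(m^U)$; thus $z_{[n]}=x(k_1,\dots,k_n)$, $z_{\{i\}}=x(p_1,\dots,p_{i-1},k_i,p_{i+1},\dots,p_n)$ and $z_\emptyset=x(p_1,\dots,p_n)$. I would argue for the tuples $(k_1,\dots,k_n)$ with $k_j\ne p_j$ for every $j$, which is the case carrying content. The key bookkeeping point is that for each $A\subseteq[n]$ the tuple $m^A$ has exactly $n-|A|$ entries equal to their maximal value, so that $m^A\in S_0\smallsetminus\{(p_1,\dots,p_n)\}$ holds precisely when $|A|\ge2$.

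Feeding such an $m^A$ into the assumed identity, the $j$th tensor factor is $2\eee_{p_j}$ when $j\notin A$ and $\eee_{k_j}+\eee_{p_j}$ when $j\in A$; expanding the product and using that $g$ is linear gives $2^{\,n-|A|}\sum_{U\subseteq A}z_U=0$, hence
\begin{equation*}
w_A:=\sum_{U\subseteq A}z_U=0\qquad\text{for every }A\subseteq[n]\text{ with }|A|\ge2 .
\end{equation*}
Regarding $A\mapsto w_A$ as a transform on the subset lattice, M\"obius inversion gives $z_A=\sum_{U\subseteq A}(-1)^{|A|-|U|}w_U$. Taking $A=[n]$ and dropping the vanishing terms $w_U$ with $|U|\ge2$ leaves only $w_\emptyset=z_\emptyset$ and $w_{\{i\}}=z_\emptyset+z_{\{i\}}$, so that
\begin{equation*}
z_{[n]}=(-1)^{n}z_\emptyset+(-1)^{n-1}\sum_{i=1}^{n}\bigl(z_\emptyset+z_{\{i\}}\bigr)
       =(-1)^{n-1}\Bigl(\sum_{i=1}^{n}z_{\{i\}}+(n-1)z_\emptyset\Bigr),
\end{equation*}
where the last equality uses $(-1)^{n}=-(-1)^{n-1}$. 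Rewriting $z_{[n]},z_{\{i\}},z_\emptyset$ in the original notation is exactly the asserted identity.

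I expect the only genuine obstacle to be the combinatorial and sign bookkeeping: recognizing that it is exactly the subsets with $|A|\ge2$ that yield usable relations (the $|A|=1$ tuples having been deliberately removed from $S_0$), carrying the harmless factor $2^{\,n-|A|}$, and tracking the alternating signs so that the coefficient of $z_\emptyset$ collapses to $(-1)^{n-1}(n-1)$. If one prefers to avoid invoking M\"obius inversion, the same conclusion follows by induction on $|A|$ from the recursion $z_A=-\sum_{U\subsetneq A}z_U$, proving $z_A=(-1)^{|A|-1}(\sum_{i\in A}z_{\{i\}}+(|A|-1)z_\emptyset)$; this route needs only the elementary identities $\sum_s(-1)^s\binom{a}{s}=0$ and $\sum_s(-1)^s s\binom{a}{s}=0$ and is a little longer but wholly routine.
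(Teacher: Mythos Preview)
Your argument is correct and is genuinely different from the paper's. The paper proceeds by induction on $n$: it expands $(\eee_{k_1}+\eee_{p_1})\otimes\cdots\otimes(\eee_{k_{n+1}}+\eee_{p_{n+1}})$ as the $n$-fold product tensored with $(\eee_{k_{n+1}}+\eee_{p_{n+1}})$, invokes the inductive hypothesis on the first $n$ factors, and then chases the remaining terms through a somewhat delicate cancellation to reach the $(n+1)$-variable identity. Your approach instead fixes a tuple with all $k_j\ne p_j$, observes that substituting the ``partial'' tuples $m^A$ into the hypothesis yields the zeta-transform relations $w_A=\sum_{U\subseteq A}z_U=0$ for $|A|\ge 2$, and then reads off $z_{[n]}$ by M\"obius inversion on the Boolean lattice. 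This buys you a one-line computation in place of the paper's inductive bookkeeping, and it makes completely transparent why only the singletons and the empty set survive and why the coefficient of $z_\emptyset$ comes out to $(-1)^{n-1}(n-1)$. The paper's induction has the minor advantage of not needing to name M\"obius inversion, but your alternative inductive route on $|A|$ covers that as well and is still shorter. Your explicit remark that the case $k_j\ne p_j$ for all $j$ is the one carrying content is accurate and matches how the lemma is actually invoked later in the proof of the main theorem.
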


\begin{proof}
We show the assertion by induction on $n$.
If $n=2$ then the assertion 
$$g(\eee_{k_1}\otimes\eee_{k_2})=-g(\eee_{k_1}\otimes\eee_{p_2}+\eee_{p_1}\otimes\eee_{k_2})-g(\eee_{p_1}\otimes\eee_{p_2})$$
follows from
$$(\eee_{k_1}+\eee_{p_1}) \otimes (\eee_{k_{2}}+\eee_{p_{2}}) = 
\eee_{k_1}\otimes\eee_{k_2}+(\eee_{k_1}\otimes\eee_{p_2}
\eee_{p_1}\otimes\eee_{k_2})+\eee_{p_1} \otimes \eee_{p_2}.
$$
Put 
$$W_n=\eee(k_1,p_2,\ldots,\eee_{p_{n}})+\eee(p_1,k_2,p_3,\ldots,\eee_{p_{n}})
+\cdots+\eee(p_1,\ldots,\eee_{p_{n-1}},\eee_{k_{n}})$$
for short.
We have
\begin{equation*}
\begin{split}
(W_n+&n\eee(p_1,\ldots,p_n))\otimes(\eee_{k_{n+1}}+\eee_{p_{n+1}}) \\
&=\sum_{h=1}^n (\eee(p_1,\ldots,p_{h-1},k_h,p_{h+1},\ldots,p_n,k_{n+1}) \\
& \hskip10mm
  +\eee(p_1,\ldots,p_n)\otimes(\eee_{k_{n+1}}+\eee_{p_{n+1}})) 
 +W_n\otimes\eee_{p_{n+1}} \\
& =0.
\end{split}
\end{equation*}
As the induction assumption, we assume that
$$
g((\eee_{k_1}+\eee_{p_1}) \otimes \cdots \otimes 
  (\eee_{k_{n}}+\eee_{p_{n}})) = 0
$$
implies 
$$
g(\eee(k_1,\ldots, k_n))= (-1)^{n-1}g(W_n +(n-1)\eee(p_1,\ldots,p_n))
$$
for any $(k_1,\ldots,k_n)$ and any $(p_1,\ldots,p_n)$.
Then we have
\begin{equation*}
\begin{split}
0&=g((\eee_{k_1}+\eee_{p_1}) \otimes \cdots \otimes (\eee_{k_{n}}+\eee_{p_{n}})\otimes (\eee_{k_{n+1}}+\eee_{p_{n+1}}))\\
& = g((\eee(k_1,\ldots,k_{n})+(-1)^n(W_n
  +(n-1)\eee(p_1,\ldots,p_{n}))) \otimes (\eee_{k_{n+1}}+\eee_{p_{n+1}}))
\\
& = g((\eee(k_1,\ldots,k_{n})-(-1)^n\eee(p_1,\ldots,p_{n}))\otimes (\eee_{k_{n+1}}+\eee_{p_{n+1}}))
\\
& = g(\eee(k_1,\ldots,k_{n+1}) +(-1)^{n-1}(W_n+(n-1)\eee(p_1,\ldots,p_{n}))
  \otimes\eee_{p_{n+1}} \\
& \hskip10mm 
  -(-1)^n\eee(p_1,\ldots,p_n)\otimes(\eee_{k_{n+1}}+\eee_{p_{n+1}})) \\
& = g(\eee(k_1,\ldots,k_{n+1}) 
  -(-1)^{n}[W_{n+1}+n\eee(p_1,\ldots,p_{n+1})]) \\
\end{split}
\end{equation*}
Therefore the assertion holds for $n+1$.
\end{proof}

\begin{lemma} \label{lem:expand}
We suppose that $v_1=1$ if $n=1$.
If
$$g((\eee_{i'_1}+v_1\eee_{p_1})\cdots(e_{i'_n}+v_n\eee_{p_n}))=0$$
for any $1\leq i'_j\leq p_j$, $j=1,\ldots,n$ such that 
$(i'_1,\ldots,i'_n)\ne (p_1,\ldots,p_n)$
then
$$g((e_{k_1}+u_1\eee_{p_1})\cdots(e_{k_n}+v_n\eee_{p_n}))=
(u_1-1)\cdots(u_k-1)x(p_1,\ldots,p_n).$$
\end{lemma}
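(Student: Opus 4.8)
The statement is affine-linear in each coefficient $u_j$, so the plan is to prove it by induction on $n$, running parallel to the proof of Lemma~\ref{lem:codim1}. For the base case $n=1$ the hypothesis reads $g(\eee_{i'_1}+\eee_{p_1})=0$ for every $i'_1\ne p_1$ (here $v_1=1$ by assumption), which is exactly $x(i'_1)=-x(p_1)$; substituting this into $g(\eee_{k_1}+u_1\eee_{p_1})=x(k_1)+u_1x(p_1)$ gives $(u_1-1)x(p_1)$, as required. For the inductive step I would split off the last tensor slot, writing
\begin{equation*}
g\Bigl(\bigotimes_{j=1}^{n}(\eee_{k_j}+u_j\eee_{p_j})\Bigr)
 =g\Bigl(\bigotimes_{j=1}^{n-1}(\eee_{k_j}+u_j\eee_{p_j})\otimes\eee_{k_n}\Bigr)
 +u_n\,g\Bigl(\bigotimes_{j=1}^{n-1}(\eee_{k_j}+u_j\eee_{p_j})\otimes\eee_{p_n}\Bigr),
\end{equation*}
and treating the two $(n-1)$-fold functionals $g(\,\cdot\otimes\eee_{k_n})$ and $g(\,\cdot\otimes\eee_{p_n})$ separately. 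The key point is that fixing the last index to $p_n$ forces the relevant values of $v_j$ to become $1$ for every coordinate that is not already $p_j$, so the $\eee_{p_n}$-slice inherits a hypothesis of exactly the shape handled in Lemma~\ref{lem:codim1}; that lemma then rewrites the corresponding $x(\ldots)$ in terms of the boundary values $x(p_1,\ldots,k_i,\ldots,p_n)$, which the one-non-$p$ instances of the hypothesis pin down to $-x(p_1,\ldots,p_n)$. The $\eee_{k_n}$-slice is handled by the same reduction one dimension lower.

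Equivalently, and this is the bookkeeping I would actually carry out, one expands the product into its $2^n$ monomials,
\begin{equation*}
g\Bigl(\bigotimes_{j=1}^{n}(\eee_{k_j}+u_j\eee_{p_j})\Bigr)
 =\sum_{T\subseteq\{1,\ldots,n\}}\Bigl(\prod_{j\in T}u_j\Bigr)\,x(\mathbf{k}_T),
\end{equation*}
where $\mathbf{k}_T$ carries $p_j$ in the slots $j\in T$ and $k_j$ elsewhere, and then shows that each coefficient collapses to $x(\mathbf{k}_T)=(-1)^{\,n-|T|}x(p_1,\ldots,p_n)$. Granting this, the identity $\prod_{j}(u_j-1)=\sum_{T}\bigl(\prod_{j\in T}u_j\bigr)(-1)^{\,n-|T|}$ recombines the sum into $\prod_{j=1}^{n}(u_j-1)\,x(p_1,\ldots,p_n)$, which is the asserted formula. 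The verification of $x(\mathbf{k}_T)=(-1)^{\,n-|T|}x(p_1,\ldots,p_n)$ is the heart of the matter: it is obtained by applying Lemma~\ref{lem:codim1} to the sub-configuration supported on the slots $j\notin T$ (with the $T$-slots frozen at $p_j$) and then feeding in the boundary relations supplied by the hypothesis.

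I expect the genuine difficulty to be neither the expansion nor the final resummation, but the case analysis concealed in the piecewise definition of $u_j$ (hence of $v_j$): whether a coordinate equals $p_j$, whether some \emph{other} coordinate does, or neither, selects a different branch of the definition, and this branch can switch when one restricts to a slice or drops to $n-1$ variables. The careful part is keeping the three cases mutually consistent through the reduction, in particular checking that the hypotheses inherited by the $\eee_{k_n}$- and $\eee_{p_n}$-slices really do fall under Lemma~\ref{lem:codim1} and the inductive hypothesis, and that one invokes only the boundary instances needed for the reduction rather than over-constraining the system.
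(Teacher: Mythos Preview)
Your plan is sound, and the second (direct-expansion) route in particular goes through cleanly: freezing the $T$-slots at $p_j$ forces the relevant $v_j$'s on the complementary slots to equal $1$ (since some other coordinate now equals its $p_s$), so the hypothesis of Lemma~\ref{lem:codim1} is satisfied for the restricted functional; that lemma then expresses $x(\mathbf{k}_T)$ in terms of the values $x(p_1,\ldots,k_j,\ldots,p_n)$, and the one-free-slot instances of the original hypothesis (again with $v_j=1$) pin each of these to $-x(p_1,\ldots,p_n)$, yielding $x(\mathbf{k}_T)=(-1)^{n-|T|}x(p_1,\ldots,p_n)$ exactly as you claim. The resummation via $\prod_j(u_j-1)=\sum_T(-1)^{n-|T|}\prod_{j\in T}u_j$ then finishes.

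The paper's argument is also induction on $n$ but is organised differently: rather than expanding fully and invoking Lemma~\ref{lem:codim1}, it rewrites the last factor as $(\eee_{k_{n+1}}+\beta\eee_{p_{n+1}})+(u_{n+1}-\beta)\eee_{p_{n+1}}$ for a suitably chosen $\beta$, so that each of the two resulting pieces, when tensored with the first $n$ factors, falls under the $n$-variable induction hypothesis directly; Lemma~\ref{lem:codim1} is never called inside this proof. Your route is more explicitly combinatorial and makes the underlying identity $x(\mathbf{k}_T)=(-1)^{n-|T|}x(p_1,\ldots,p_n)$ visible, at the price of depending on Lemma~\ref{lem:codim1}; the paper's route is self-contained, but the verification that the two slices really inherit the $n$-variable hypothesis is precisely the branch-tracking you flag in your last paragraph. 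Either way the delicate step is the same: checking that restricting or slicing flips the piecewise definition of $u_j$ into the ``some other coordinate equals $p_s$'' branch, so that the inherited coefficients are all $1$.
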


\begin{proof}
We show the assertion by induction on $n$.
If $n=1$ then 
\begin{equation*}
\begin{split}
g(e_{k_1}+u_1\eee_{p_1})&=
g((e_{k_1}+u_1\eee_{p_1})-(e_{k_1}+v_1\eee_{p_1})) \\
&=(u_1-1)x({p_1}).
\end{split}
\end{equation*} 
As the induction assumption, we assume that the assertion holds
for $n$ and any $p_1,\ldots,p_n$.
Putting $\beta=u_1(i_1,i_2,\ldots,k_{n+1})$, we have
\begin{equation*}
\begin{split}
g((e_{k_1}&+u_1\eee_{p_1})\otimes\cdots\otimes(e_{k_{n+1}}+v_{n+1}\eee_{p_{n+1}})) \\
& =g((e_{k_1}+u_1\eee_{p_1})\otimes\cdots\otimes(e_{k_{n}}+v_{n}\eee_{p_{n}})\otimes(e_{k_{n+1}}+\beta\eee_{p_{n+1}})) \\
& \hskip10mm 
  +(u_{n+1}-\beta)g((e_{k_1}+u_1\eee_{p_1})\otimes\cdots\otimes (e_{k_n}+u_n\eee_{p_n})\otimes\eee_{p_{n+1}})) \\
& =(u_1-1)\cdots(u_{n}-1)g(\eee(p_1,\ldots,p_n)\otimes(\eee_{k_{n+1}}+\beta\eee_{p_{n+1}}))) \\
& \hskip10mm 
  +(u_1-1)\cdots(u_n-1)(u_{n+1}-\beta)g(\eee(p_1,\ldots,p_{n})\otimes \eee_{p_{n+1}}) \\
& =(u_1-1)\cdots(u_{n}-1)g(\eee(p_1,\ldots,p_n)\otimes\eee_{k_{n+1}}) \\
& \hskip10mm 
  +(u_1-1)\cdots(u_{n}-1)u_{n+1}g(\eee(p_1,p_2,\ldots,p_{n+1})) \\
& =-1(u_1-1)\cdots(u_{n}-1)x(p_1,p_2,\ldots,p_{n+1}) \\
& +(u_1-1)\cdots(u_n-1)u_{n+1}x(p_1,p_2,\ldots,p_{n+1}) \\
& =(u_1-1)\cdots(u_{n+1}-1)x(p_1,p_2,\ldots,p_{n+1}). \\
\end{split}
\end{equation*} 
We complete the proof.
\end{proof}

\begin{lemma} \label{lem:main:n=2}
Suppose that $n=2$, $2\leq p_1\leq p_2\leq p_3$,
$p_1p_2-p_1-p_2+3\leq p_3\leq p_1p_2$.
Then the equation $J(\varphi(\zzz))\xxx^T=\zerovec$ 
implies $\xxx=\zerovec$.
\end{lemma}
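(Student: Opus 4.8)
The plan is to rewrite the homogeneous equation $J(\varphi(\zzz))\xxx^T=\zerovec$ as scalar relations among the coordinates $x(i,j,l)$ of $\xxx$ and then to annihilate these coordinates family by family. By the block form of $\phi_1$ in \eqref{eq:phi1}, the rows of $J(\varphi(\zzz))$ coming from the $h$-th summand are the vectors $\eee_i\otimes\aaa^{(2)}_h\otimes\aaa^{(3)}_h$, $\aaa^{(1)}_h\otimes\eee_j\otimes\aaa^{(3)}_h$ and $\aaa^{(1)}_h\otimes\aaa^{(2)}_h\otimes\eee_l$, and pairing each with $\xxx$ is precisely evaluating $g$. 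Since $\aaa^{(3)}_{f(k_1,k_2)}=\eee_{f(k_1,k_2)}$, setting $h=f(k_1,k_2)$ the equation becomes, for every $(k_1,k_2)\in S$, the three families (A)~$g(\eee_i\otimes\aaa^{(2)}_h\otimes\eee_h)=0$ $(1\le i\le p_1)$, (B)~$g(\aaa^{(1)}_h\otimes\eee_j\otimes\eee_h)=0$ $(1\le j\le p_2)$, and (C)~$g(\aaa^{(1)}_h\otimes\aaa^{(2)}_h\otimes\eee_l)=0$ $(1\le l\le p_3)$. The key asymmetry is that (A) and (B) constrain only the single slice $l=h$, whereas (C) constrains all slices at once.

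First I would read off the skeleton from (C). For the corner $(p_1,p_2)\in S_0$ the definition of the $u_j$ gives $u_1=u_2=0$, so (C) reads $x(p_1,p_2,l)=0$ for all $l$. For an edge point $(p_1,k_2)\in S$ with $k_2<p_2$ one has $\aaa^{(1)}_h=\eee_{p_1}$ and $\aaa^{(2)}_h=\eee_{k_2}+\eee_{p_2}$, so (C) together with the vanishing corner forces $x(p_1,k_2,l)=0$ for all $l$; symmetrically $x(k_1,p_2,l)=0$ for every edge point $(k_1,p_2)\in S$. For an interior point $(k_1,k_2)\in S_0$ (where $u_1=k_1+1$, $u_2=k_2+1$), expanding (C) and using the vanishing corner expresses the interior coordinate $x(k_1,k_2,l)$ as an explicit combination of the two boundary coordinates $x(k_1,p_2,l)$ and $x(p_1,k_2,l)$ --- the $n=2$ shadow of the expansions that Lemmas~\ref{lem:codim1} and \ref{lem:expand} carry in the general induction. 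At this stage $\xxx$ is completely determined by its boundary coordinates $x(i,p_2,l)$ and $x(p_1,j,l)$, and every boundary coordinate attached to an edge point lying in $S$ already vanishes.

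It remains to annihilate the boundary coordinates attached to the edge points outside $S$, and this is where I would exploit the diagonal relations (A), (B) together with the fact that $u_j$ depends on the point. Evaluating (A) at $i=p_1$ and (B) at $j=p_2$ gives, for each $(k_1,k_2)\in S$, $x(p_1,k_2,f(k_1,k_2))=0$ and $x(k_1,p_2,f(k_1,k_2))=0$; in particular the corner slice, and every edge-in-$S$ slice, has an entire last row or last column killed, since there $u_1$ or $u_2$ is $0$. The residual boundary coordinates then lie in the slices indexed by interior points, and I would remove them by comparison: a relation from (A) or (B) (whose coefficients are $0$ or $1$) and a relation from (C) (whose coefficients involve $k_1+1$ or $k_2+1$) that share a coordinate are genuinely different, because $u_j$ takes different values at edge and at interior points, and their discrepancy forces a further boundary coordinate to vanish. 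The main obstacle is the bookkeeping: the interior relation (C) couples an interior coordinate to two boundary coordinates at once, so the row- and column-eliminations must be run in tandem while tracking exactly which edge points lie in $S$; here the hypothesis $p_1p_2-p_1-p_2+3\le p_3$, equivalently $S\supsetneq S_0$, is what guarantees enough edge points in $S$ --- hence enough non-interior diagonal slices and enough clean pairings --- to reach every residual coordinate. Once all boundary coordinates are shown to vanish, the skeleton from (C) makes every interior coordinate vanish as well, so $\xxx=\zerovec$.
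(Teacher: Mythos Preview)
Your outline follows the paper's proof closely up to the last step: you unpack the row equations into the three families (A), (B), (C); you use the corner block in (C) to kill $x(p_1,p_2,l)$ for all $l$; you use (A) at $i=p_1$ and (B) at $j=p_2$ to obtain the ``diagonal'' vanishings $x(p_1,k_2,f(k_1,k_2))=0$ and $x(k_1,p_2,f(k_1,k_2))=0$; and you reduce every interior entry to the two boundary entries in its row and column via (C). All of this matches equations \eqref{eq:mnk}--\eqref{eq:ijk:new} in the paper, and your handling of the corner slice and of the edge--in--$S$ slices is correct.

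The gap is the elimination of the remaining boundary coordinates in an \emph{interior} slice $l=f(k_1,k_2)$. For such a slice the only (A) and (B) relations available are those from the block $h=f(k_1,k_2)$ itself, and there the coefficients are $u_2(k_1,k_2)=k_2+1$ and $u_1(k_1,k_2)=k_1+1$, \emph{not} $0$ or $1$ as you assert; (A)/(B) from corner or edge blocks carry coefficients $0$ or $1$, but those rows only constrain their own (non-interior) slice. So the ``discrepancy between an (A)/(B) relation with coefficient $0$ or $1$ and a (C) relation with coefficient $k_j+1$'' that you invoke simply does not occur in an interior slice, and your argument stalls there. The paper's mechanism at this point is different and more delicate: it compares \eqref{eq:itauk:p0} at $i'_1=i_1$ (coefficient $u_2(k_1,k_2)$) with \eqref{eq:ijk:new} at $i_2=k_2$ (coefficient $u_2(i_1,k_2)$, after using $x(p_1,k_2,l)=0$), both inside the same interior slice, and extracts $\bigl(u_2(i_1,k_2)-u_2(k_1,k_2)\bigr)\,x(i_1,p_2,l)=0$; the whole argument hinges on the specific design of $u_j$ making this factor nonzero for $i_1\ne k_1$. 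Your sketch neither identifies this pairing nor isolates the property of $u_j$ on which it rests. Relatedly, your reading of the hypothesis $p_3\ge p_1p_2-p_1-p_2+3$ as ``enough edge points in $S$ to reach every residual coordinate'' is not how the paper proceeds: its elimination for an interior slice uses only the (A)/(B) rows of that one block together with the (C) rows of all blocks, and does not call on additional edge blocks at all.
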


\begin{proof}
The equation $J(\varphi(\zzz))\xxx^T=\zerovec$ indicate
\begin{eqnarray}
x(i'_1,k_2,f(k_1,k_2))+u_2x(i'_1,p_2,f(k_1,k_2)) &=& 0, \label{eq:itauk:p0} \\
x(k_2,i'_2,f(k_1,k_2))+u_1x(p_1,i'_2,f(k_1,k_2)) &=& 0, \quad \label{eq:sigmajk:p0} \\
\begin{array}[b]{r} x(i_1,i_2,f(k_1,k_2))+v_1x(p_1,i_2,f(k_1,k_2))+v_2x(i_1,p_2,f(k_1,k_2)) \\
  +v_1v_2x(p_1,p_2,f(k_1,k_2)) \end{array} &=& 0, 
   \label{eq:ijk} 
\end{eqnarray}
for $1\leq i'_1\leq p_1$, $1\leq i'_2\leq p_2$, and $(i_1,i_2),(k_1,k_2)\in S$.
The equation \eqref{eq:ijk} for $(i'_1,i'_2)=(p_1,p_2)$ is
\begin{eqnarray} 
x(p_1,p_2,f(k_1,k_2))&=&0, \label{eq:mnk} 
\end{eqnarray}
Thus by \eqref{eq:mnk}, the equations \eqref{eq:itauk:p0} for $i'_1=p_1$ 
and \eqref{eq:sigmajk:p0} for $i'_2=p_2$ and \eqref{eq:ijk} are 
\begin{eqnarray}
x(p_1,k_2,f(k_1,k_2)) &=& 0 \label{eq:itauk:p0:new} \\
x(k_1,p_2,f(k_1,k_2)) &=& 0 \label{eq:sigmajk:p0:new} \\
x(i_1,i_2,f(k_1,k_2))+v_1x(p_1,i_2,f(k_1,k_2))+v_2x(i_1,p_2,f(k_1,k_2)) &=& 0 \label{eq:ijk:new}
\end{eqnarray}
for $1\leq i_1<p_1$, $1\leq i_2< p_2$ and $(i_1,i_2),(k_1,k_2)\in S$.
If $(k_1,k_2)=(p_1,p_2)$ then 
$$x(i_1,i_2,f(p_1,p_2))=0$$
for $1\leq i_1<p_1$ and $1\leq i_2< p_2$
by \eqref{eq:itauk:p0:new}, \eqref{eq:sigmajk:p0:new} and \eqref{eq:ijk:new}.
Put together with \eqref{eq:mnk}, \eqref{eq:itauk:p0:new} and \eqref{eq:sigmajk:p0:new},
we get
$$x(i'_1,i'_2,f(p_1,p_2))=0$$
for $1\leq i'_1\leq p_1$ and $1\leq i'_2\leq p_2$.
\par

Now we show that $x(i'_1,i'_2,f(k_1,k_2))=0$ for 
$1\leq i'_1\leq p_1$, $1\leq i'_2\leq p_2$, $(k_1,k_2)\in S$ and
$(k_1,k_2)\ne (p_1,p_2)$.
Suppose that $(k_1,k_2)\ne (p_1,p_2)$.
It follows from $(k_1,k_2)\in S$ that $k_1<p_1$ and $k_2<p_2$.
By combining \eqref{eq:itauk:p0} for $i'_1=i_1$, \eqref{eq:itauk:p0:new}
and 
\eqref{eq:ijk:new} for $i_2=k_2$,
we have
\begin{eqnarray*} 
(u_2(i_1,k_2)-u_2(k_1,k_2))x(i_1,p_2,f(k_1,k_2)) &=& 0
\end{eqnarray*}
for $1\leq i_1< p_1$.
Thus 
$$
x(i_1,p_2,f(k_1,k_2))=0
$$
for $1\leq i_1< p_1$, $i_1\ne k_1$.
Therefore $x(i'_1,p_2,f(k_1,k_2))=0$ for $1\leq i'_1\leq p_1$
by \eqref{eq:mnk} and \eqref{eq:sigmajk:p0:new}.
Similarly by combining 
\eqref{eq:sigmajk:p0} for $i'_2=i_2$, \eqref{eq:sigmajk:p0:new} and \eqref{eq:ijk:new}
for $i_1=k_1$,
we have
\begin{eqnarray*} 
(u_1(k_1,i_2)-u_1(k_1,k_2))x(p_1,i_2,f(k_1,k_2)) &=& 0
\end{eqnarray*}
which induces
$$x(p_1,i_2,f(k_1,k_2))=0$$ 
for $1\leq i_2< p_2$, $j\ne k_2$, 
and thus $x(p_1,i'_2,f(k_1,k_2))=0$ for $1\leq i'_2\leq p_2$ 
and $(k_1,k_2) \in S$
by \eqref{eq:mnk} and \eqref{eq:itauk:p0:new}.
Thus by \eqref{eq:ijk:new} again, we get
$x(i_1,i_2,f(k_1,k_2))=0$ for $1\leq i_1<p_1$, $1\leq i_2< p_2$.
Therefore $x(i'_1,i'_2,f(k_1,k_2))=0$ for 
$1\leq i'_1\leq p_1$, $1\leq i'_2\leq p_2$.
Consequently we get $\xxx=\zerovec$.
\end{proof}

\begin{thm} \label{thm:Jacobian}
The equation $J(\varphi(\zzz))\xxx^T=\zerovec$ 
implies $\xxx=\zerovec$ under the assumption in 
Theorem~\ref{thm:main}.
\end{thm}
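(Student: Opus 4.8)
The plan is to expand the single equation $J(\varphi(\zzz))\xxx^T=\zerovec$ into an explicit linear system in the coordinates $x(i_1,\ldots,i_{n+1})$ and to show, exactly as Lemma~\ref{lem:main:n=2} does for $n=2$, that the only solution is $\zerovec$. Reading off the block structure of $J(\varphi)$ from \eqref{eq:phi1}, the block attached to the summand $(k_1,\ldots,k_n)\in S$ produces two families of scalar equations. Differentiating in the first $n$ modes gives the \emph{within-slice} equations $g\bigl(\bigotimes_{s\ne h}(\eee_{k_s}+u_s\eee_{p_s})\otimes_h\eee_{i'_h}\otimes\eee_{f(k_1,\ldots,k_n)}\bigr)=0$, which keep the last index fixed at $f(k_1,\ldots,k_n)$ and generalise \eqref{eq:itauk:p0}--\eqref{eq:sigmajk:p0}; differentiating in mode $n+1$ gives the \emph{between-slice} equations $g\bigl(\bigotimes_{j=1}^{n}(\eee_{k_j}+u_j\eee_{p_j})\otimes\eee_m\bigr)=0$, one for each last index $m$, generalising \eqref{eq:ijk}. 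The crucial structural point is that every one of these equations involves only coordinates sharing a single last index, so the system \emph{decouples over the $p_{n+1}$ slices}. It therefore suffices to fix a slice $m$, write $y(i_1,\ldots,i_n):=x(i_1,\ldots,i_n,m)$, and prove $y\equiv\zerovec$.

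First I would kill the corner. The between-slice equation coming from the summand $(p_1,\ldots,p_n)\in S_0$, where every $u_j=0$, collapses to $y(p_1,\ldots,p_n)=0$ in every slice; this is the $n$-mode analogue of \eqref{eq:mnk}. With the corner gone, Lemma~\ref{lem:expand} is the natural closing device: once the expansion conditions $g\bigl(\bigotimes_j(\eee_{i'_j}+u_j(i'_1,\ldots,i'_n)\eee_{p_j})\bigr)=0$ are known for \emph{every} $(i'_1,\ldots,i'_n)\ne(p_1,\ldots,p_n)$, the lemma yields $g\bigl(\bigotimes_j(\eee_{k_j}+u_j\eee_{p_j})\bigr)=(u_1-1)\cdots(u_n-1)\,y(p_1,\ldots,p_n)=0$ for all expansions, and the choice $u_1=\cdots=u_n=0$ forces $y(k_1,\ldots,k_n)=0$ for every index. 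Thus everything reduces to verifying the hypothesis of Lemma~\ref{lem:expand} in each slice.

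That hypothesis holds automatically at every $(i'_1,\ldots,i'_n)\in S$, since those are precisely the between-slice equations already in hand, and $S\supseteq S_0$. The only indices left to treat are those of $S_0^{c}\setminus S$, i.e.\ the omitted indices with a single entry $<p_j$; for such an index the expansion condition is just $y(p_1,\ldots,c,\ldots,p_n)+y(p_1,\ldots,p_n)=0$, so after the corner step it is \emph{equivalent} to the vanishing of that one coordinate. Hence the whole theorem comes down to the claim that, in every slice, all coordinates indexed by $S_0^{c}$ vanish. For the slice whose summand is the corner this is immediate: all $u_s=0$, so the within-slice equations read $y(p_1,\ldots,i'_h,\ldots,p_n)=0$ and settle the entire $S_0^{c}$-boundary at one stroke, after which Lemma~\ref{lem:codim1} expresses each interior coordinate through that boundary and the corner, and Lemma~\ref{lem:expand} finishes the slice. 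For a general slice I would run the same skeleton, using the within-slice equations of its summand $(a_1,\ldots,a_n)$ together with the reduction of Lemma~\ref{lem:codim1} (once the pertinent all-ones expansion conditions have been assembled from the between-slice relations available on $S_0$) to cut the remaining $S_0^{c}$ coordinates down to the trivial solution.

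The step I expect to be the main obstacle is this last one: propagating the vanishing of the $S_0^{c}$ coordinates in a slice whose summand is \emph{not} the corner. When the summand $(a_1,\ldots,a_n)$ has its unique non-maximal entry in position $t$, its within-slice equations force vanishing only along the face transverse to $t$ (the mode-$t$ equations, where all other $u_s=0$); the faces transverse to the other modes are controlled only indirectly, and the coordinates of $S_0^{c}\setminus S$ are exactly the ones over which one has the least direct leverage. Closing this gap means assembling, for each such slice, the one-sided within-slice relations, the $S_0^{c}$ between-slice relations inherited from $S_0$, and the interior-to-boundary formula of Lemma~\ref{lem:codim1} into a single homogeneous system and verifying that it has only the zero solution. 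This bookkeeping is the combinatorial core of the argument, and it is here that the full strength of the hypothesis $q\le p_{n+1}\le p_1\cdots p_n$ --- equivalently, that $S$ is a cardinality-$p_{n+1}$ set sandwiched between $S_0$ and the whole grid --- is consumed.
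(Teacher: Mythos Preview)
Your overall skeleton is sound: the system does decouple over slices, the corner dies first, and the problem does reduce to proving that the $S_0^{c}$-indexed coordinates vanish in each slice. But the proof is not complete, and the step you yourself flag as the ``main obstacle'' is exactly where the entire content lies; as written, you have reduced the theorem to an unproved claim.

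A small correction first: Lemma~\ref{lem:expand} does not let you ``choose $u_1=\cdots=u_n=0$'' freely. In that lemma the $u_j$ are the fixed values $u_j(k_1,\ldots,k_n)$, and all $u_j=0$ happens only at the corner itself. What is actually true is that once the hypothesis of Lemma~\ref{lem:expand} holds and the corner vanishes, the $S_0^{c}$ expansions already read $y(p_1,\ldots,c,\ldots,p_n)=0$ directly, and then Lemma~\ref{lem:codim1} kills the interior; so your reduction survives, but not by the route you describe.

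The real gap is that you neither carry out the general-slice step nor indicate a mechanism that would make your proposed ``assembly'' terminate. The paper closes it with a device you do not use: \emph{induction on $n$}. When the summand satisfies $k_j=p_j$ for some $j$, one has $u_j=0$ and $u_s=1$ for $s\neq j$, so the within-slice equations \eqref{eq:1:a}--\eqref{eq:n:a} collapse to an $(n-1)$-mode Jacobian system of exactly the same shape, and the induction hypothesis gives $y(i'_1,\ldots,p_j,\ldots,i'_n)=0$ outright; Lemma~\ref{lem:expand} and a layer-by-layer peeling then finish that slice. For the remaining slices with all $k_j<p_j$, one applies Lemma~\ref{lem:codim1} to the between-slice equations \eqref{eq:n+1:a} with a single $i_j=p_j$; this produces $n$ homogeneous linear relations among the $n$ boundary values $y(p_1,\ldots,i_t,\ldots,p_n)$ whose coefficient matrix is $J_n-E_n$ (all-ones minus identity), with determinant $(-1)^{n-2}(n-1)\neq 0$, forcing the boundary to zero. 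Without this induction and this determinant computation your ``combinatorial core'' is simply not done, and for $n\ge 3$ there is no evident way to finish by direct assembly alone.
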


\begin{proof}
We consider the linear equation $J(\varphi(\zzz))\xxx^T=\zerovec$.
This equation is equivalent to
\begin{equation*} \label{eq:jacobian}
\psi_1(\aaa^{(1)}_k,\ldots,\aaa^{(n+1)}_k)\xxx^T=\zerovec,\ 1\leq k\leq p_n.
\end{equation*}
By \eqref{eq:phi1},
these equations 
indicate the following:
\begin{eqnarray*}
g(\eee_{i'_1} \otimes \aaa^{(2)}_k \otimes \aaa^{(3)}_k\otimes \cdots \otimes \aaa^{(n+1)}_k) &=& 0, 
\label{eq:1} \\
g(\aaa^{(1)}_k \otimes \eee_{i'_2} \otimes \aaa^{(3)}_k\otimes \cdots \otimes \aaa^{(n+1)}_k) &=& 0, 
\label{eq:2} \\
\vdots \notag \\
g(\aaa^{(1)}_k \otimes \cdots \otimes \aaa^{(n-1)}_k \otimes \eee_{i'_n} \otimes \aaa^{(n+1)}_k) &=& 0, 
\label{eq:n} \\
g(\aaa^{(1)}_k \otimes \cdots \otimes \aaa^{(n-1)}_k \otimes \aaa^{(n)}_k \otimes \eee_{i'_{n+1}}) &=& 0. 
\label{eq:n+1} 
\end{eqnarray*}
for $1\leq k\leq p_n$. 
In this proof, 
we always assume that $i'_j$ is taken over $1,2,\ldots,p_j$ 
for each $j=1,\ldots,n$.
Thus
\begin{eqnarray}
g((\eee_{i'_1} \otimes (\eee_{k_2}+u_2\eee_{p_2}) \otimes \cdots \otimes (\eee_{k_n}+u_n\eee_{p_n})\otimes \eee_{f(k_1,\ldots,k_{n})}) &=& 0, 
\label{eq:1:a} \\
g((\eee_{k_1}+u_1\eee_{p_1}) \otimes \eee_{i'_2} \otimes (\eee_{k_3}+u_3\eee_{p_3})\otimes \cdots \otimes \eee_{f(k_1,\ldots,k_{n})}) &=& 0, 
\label{eq:2:a} \\
\vdots\hspace{20mm} \notag \\
g((\eee_{k_1}+u_1\eee_{p_1}) \otimes \cdots \otimes (\eee_{k_{n-1}}+u_{n-1}\eee_{p_{n-1}}) \otimes \eee_{i'_n} \otimes \eee_{f(k_1,\ldots,k_{n})}) &=& 0, 
\label{eq:n:a} \\
g((\eee_{i_1}+v_1\eee_{p_1}) \otimes \cdots \otimes (\eee_{i_{n}}+v_{n}\eee_{p_{n}}) \otimes \eee_{f(k_1,\ldots,k_n)}) &=& 0. 
\label{eq:n+1:a} 
\end{eqnarray}
for any $(i_1,\ldots,i_n),(k_1,\ldots,k_n)\in S$. 

We show the assertion by induction on $n$.
The assertion for $n=2$ holds by Lemma~\ref{lem:main:n=2}.
We suppose that $n\geq 3$ and 
the assertion holds for $n-1$ as the induction assumption.

By putting $(i'_1,\ldots,i'_n)=(p_1,\ldots,p_n)$, we get
\begin{equation}
x(p_1,\ldots,p_n,f(k_1,\ldots,k_n)) = 0 \label{eq:p:0}
\end{equation}
for any $(k_1,\ldots,k_n)\in S$.
Now let $k_n=p_n$.  Put $f_1=f(k_1,\ldots,k_{n-1},p_n)$ for short.
Then $u_1=\cdots=u_{n-1}=1$ and $u_n=0$.
By the $n$ equations \eqref{eq:1:a}-\eqref{eq:n:a},
the induction assumption yields us 
\begin{equation}
x(i'_1,\ldots,i'_{n-1},{p_n},f_1)=0 \label{eq:pn:0}
\end{equation}
for any $(k_1,\ldots,k_{n-1},p_n) \in S$ and any $i'_1,\ldots,i'_{n-1}$.
Then, by \eqref{eq:n+1:a} we get
\begin{equation} \label{eq:eq:n+1:b}
g((\eee_{i_1}+v_1\eee_{p_1}) \otimes \cdots 
\otimes (\eee_{i_{n-1}}+v_{n-1}\eee_{p_{n-1}}) \otimes 
  \eee_{i_n} \otimes \eee_{f_1}) = 0 
\end{equation}
for all $(i_1,\ldots,i_n)\in S$.
This equation and \eqref{eq:n:a} indicate
\begin{equation}
x(p_1,\ldots,p_{n-1},i_n,f_1)=0 \label{eq:pn:1}
\end{equation}
by Lemma~\ref{lem:expand} if $i_n<p_n$.
Suppose that $i_n<p_n$.
In the equation \eqref{eq:eq:n+1:b} we put $i_j=p_j$ for $n-2$ numbers $j$'s
with $j<n$ and get 
\begin{equation*}
x(i_1,p_2,\ldots,p_{n-1},i_n,f_1)=\cdots
=x(p_1,\ldots,p_{n-2},i_{n-1},i_{n},f_1)=0 
\end{equation*}
for $1\leq i_j<p_j$, $j=1,\ldots,n$, and thus
\begin{equation}
x(i'_1,p_2,\ldots,p_{n-1},i_n,f_1)=\cdots
=x(p_1,\ldots,p_{n-2},i'_{n-1},i_{n},f_1)=0 \label{eq:pn:2}
\end{equation}
for any $i'_1,\ldots,i'_n$ by \eqref{eq:pn:1}.
In the equation \eqref{eq:eq:n+1:b} we put $i_j=p_j$ for $n-3$ numbers $j$'s
and get 
\begin{equation*}
x(i_1,i_2,p_3,\ldots,p_{n-1},i_n,f_1)=\cdots
=x(p_1,\ldots,p_{n-3},i_{n-2},i_{n-1},i_{n},f_1)=0 
\end{equation*}
for $1\leq i_j<p_j$, $j=1,\ldots,n$, and thus
\begin{equation*}
x(i'_1,i'_2,p_3,\ldots,p_{n-1},i_n,f_1)=\cdots
=x(p_1,\ldots,p_{n-3},i'_{n-2},i'_{n-1},i_{n},f_1)=0 
\end{equation*}
by \eqref{eq:pn:2}.
And go on, finally we get
\begin{equation*}
x(i'_1,\ldots,i'_{n-1},i_{n},f_1)=0 \label{eq:pn:(n-1)}
\end{equation*}
for any $i'_1,\ldots,i'_{n-1}$ and any $1\leq i_n<p_n$
and then by \eqref{eq:pn:0}
$$x(i'_1,\ldots,i'_{n-1},i'_{n},f_1)=0$$
for any $i'_1,\ldots,i'_n$.
If we consider the similar argument for $j$ instead of $n$,
we have 
$$x(i'_1,\ldots,i'_n,f(k_1,\ldots,k_n))=0$$
for any $i'_1,\ldots, i'_n$ and any $(k_1,\ldots,k_n)\in S$ with $k_j=p_j$
for some $j$.
\par
To complete the proof, it suffices to show that
$$x(i'_1,\ldots,i'_n,f(k_1,\ldots,k_n))=0$$
for any $i'_1,\ldots, i'_n$ and any $(k_1,\ldots,k_n)\in S$ with $k_j<p_j$
for each $j$.
Let $f_2=f(k_1,\ldots,k_n)$ for short.
By putting $i_n=p_n$ in \eqref{eq:n+1:a}, we get
$$g((\eee_{i_1}+\eee_{p_1}) \otimes\cdots\otimes (\eee_{i_{n-1}}+\eee_{p_{n-1}}) \otimes \eee_{p_n}\otimes \eee_{f_2}) = 0$$ 
for $(i_1,\ldots,i_{n-1},p_n)\in S$.
By Lemma~\ref{lem:codim1}, we have
\begin{equation*}
\begin{split}
0 &=g((\eee(i_1,p_2,\ldots,p_{n-1})+\cdots+\eee(p_1,\ldots,p_{n-2},i_{n-1}) \\
& \hskip10mm
  +(n-2)\eee(p_1,\ldots,p_{n-1}))\otimes \eee(p_n,f_2)) \\
& =g((\eee(i_1,p_2,\ldots,p_{n-1})+\cdots+\eee(p_1,\ldots,p_{n-2},i_{n-1}))\otimes \eee(p_n,f_2)). 
\end{split}
\end{equation*}
Thus 
$$g((\eee(i_1,p_2,\ldots,p_{n})+\cdots+\eee(p_1,\ldots,p_{n-2},i_{n-1},p_n))\otimes \eee_{f_2})=0.$$
Similarly, for each $j=1,\ldots,n-1$, 
by putting $i_j=p_j$ in \eqref{eq:n+1:a}, we get
\begin{eqnarray*}
g((\eee(p_1,i_2,p_3,\ldots,p_{n})+\cdots+\eee(p_1,\ldots,p_{n-1},i_{n}))\otimes \eee_{f_2})&=&0, \\
\vdots \hskip20mm \notag \\
\begin{array}[b]{r}
  g((\eee(i_1,p_2,\ldots,p_{n})+\cdots+\eee(p_1,\ldots,p_{p-3},p_{i-2},p_{n-1},p_{n}) \\
  +\eee(p_1,\ldots,p_{n-1},i_{n}))\otimes \eee_{f_2})
\end{array} &=&0.
\end{eqnarray*}
Since 
$$\left|\begin{pmatrix} 1&\cdots&1\\ \vdots&&\vdots\\ 1&\cdots&1\end{pmatrix}
-E_n\right|=(-1)^{n-2}(n-1),$$
we have
$$x(i_1,p_2,\ldots,p_{n},f_2)=\cdots=x(p_1,\ldots,p_{n-1},i_n,f_2)=0$$
for $1\leq i_j<p_j$, $j=1,\ldots,n$, 
and then
$$x(i'_1,p_2,\ldots,p_{n},f_2)=\cdots=x(p_1,\ldots,p_{n-1},i'_n,f_2)=0$$
for all $i'_1,\ldots,i'_n$, 
since  $x(p_1,p_2,\ldots,p_{n},f_2)=0$.
By putting $i'_j=p_j$ for $n-2$ numbers $j$'s in the equation 
\eqref{eq:n+1:a} we get 
$$x(i_1,i_2,p_3,\ldots,p_n,f_2)=\cdots
=x(p_1,\ldots,p_{n-2},i_{n-1},i_{n},f_2)=0$$
for $1\leq i_j<p_j$, $j=1,\ldots,n$, 
and then
$$x(i'_1,i'_2,p_3,\ldots,p_n,f_2)=\cdots
=x(p_1,\ldots,p_{n-2},i'_{n-1},i'_{n},f_2)=0$$
for all $i'_1,\ldots,i'_n$. 
And so on, we finally get
$$x(i'_1,\ldots,i'_n,f_2)=0$$
for all $i'_1,\ldots,i'_n$.
We complete the proof.
\end{proof}


Now we show Theorem~\ref{thm:main}.
\medskip

\begin{proof}[of Theorem~\ref{thm:main}]
Let $r$ be a typical rank of $p_1\times \cdots\times p_{n+1}$ tensors. 
Then $p_{n+1}\leq r\leq p_1p_2\cdots p_n$
by Lemma~\ref{lem:minimal typical rank}.
In particular, note that any integer less than $p_{n+1}$ is not a typical rank.
Since $p_{n+1}\geq q$, it holds that 
$p_{n+1}$ is a typical rank by Theorem~\ref{thm:Jacobian}.
\par
Conversely suppose that $p_{n+1}$ is a typical rank of 
$p_1\times \cdots\times p_{n+1}$ tensors.
By Proposition~\ref{prop:typical_rank_range}, 
$$p_{n+1}\geq \frac{p_1\cdots p_{n+1}}{p_1+\cdots+p_{n+1}-n}$$
which implies that
$p_{n+1}\geq q$,
and also, a typical rank is less than or equal to $p_1\cdots p_n$.
Thus $p_{n+1}\leq p_1\cdots p_n$.
We complete the proof.
\end{proof}


\providecommand{\bysame}{\leavevmode\hbox to3em{\hrulefill}\thinspace}
\providecommand{\MR}{\relax\ifhmode\unskip\space\fi MR }
\providecommand{\MRhref}[2]{%
  \href{http://www.ams.org/mathscinet-getitem?mr=#1}{#2}
}
\providecommand{\href}[2]{#2}

\end{document}